\newtheorem{corollary}{Corollary}
\newtheorem{example}{Example}
\newtheorem{theorem}{Theorem}
\newtheorem{definition}{Definition}
\begin{document}

\author[M. B\'ona]{Mikl\'os  B\'ona}
\title[Surprising symmetries in 132-avoiding permutations]{Surprising
symmetries in 132-avoiding permutations}
\address{\rm M. B\'ona, Department of Mathematics, 
University of Florida,
358 Little Hall, 
PO Box 118105, 
Gainesville, FL 32611--8105 (USA)
}

\date{}

\begin{abstract} 
We prove that the total number $S_{n,132}(q)$ of copies of 
the pattern $q$ in all 132-avoiding permutations
of length $n$ is the same
for $q=231$, $q=312$, or $q=213$. We provide a combinatorial
proof for this unexpected threefold symmetry. We then 
significantly generalize
this result to show an exponential number
of different pairs of patterns $q$ and $q'$
of length $k$ for which $S_{n,132}(q)=S_{n,132}(q')$ and the equality
 is non-trivial.
\end{abstract}

\maketitle

\centerline{{\em Dedicated to the memory of Herb Wilf}}

\section{Introduction}
\subsection{Background and Definitions}
Let $q=q_1 q_2\ldots q_k$ be a permutation in the
symmetric group $S_k$.  
We say that the permutation
$p=p_1 p_2 \ldots p_n\in
 S_n$ {\it contains a $q$-pattern\/}
 if and only if there is a subsequence
$p_{i_1}p_{i_2}\ldots p_{i_k}$  of $p$ whose elements are in the same
relative order as those in $q$, that is,
$$
p_{i_t}<p_{i_u} \mbox{ if and only if } q_t<q_u
$$
whenever $1\leq t,u\leq k$. If $p$ does not contain $q$, then we say that
$p$ {\em avoids} $q$.  For instance, 214653 contains 231 (consider the 
third, fourth, and sixth entries), but avoids 4321.
See Chapter 14 of \cite{bona}
for an introduction to pattern avoiding permutations, and Chapters 4 and 5
of \cite{combperm} for a somewhat more detailed treatment.

It is straightforward to compute, using the linear property of expectation,
 that the average number of $q$-patterns in a randomly selected permutation
of length $n$ is $\frac{1}{k!}{n\choose k}$, where $k$ is the length of $q$.

Joshua Cooper \cite{cooper}
 has raised the following interesting family of questions. 
Let $r$ be a given permutation pattern. What can be said about the
average number of occurrences of $q$ in a randomly selected $r$-avoiding
permutation of a given length?
 Equivalently, can we determine the {\em total number}
$S_{n,r}(q)$ of all $q$-patterns in all $r$-avoiding permutations of
length $n$? 

\subsection{Earlier Results}
In \cite{occurrences}, present author found formulae for the generating
 functions of 
the sequence $S_{132,n}(q)$ for the cases of monotone $q$, that is,
for $q=12\cdots k$ and $q=k(k-1)\cdots 1$, for any $k$. He also 
proved that if $n$ is large enough, then for any fixed $k$, 
  among all patterns $q$ of length $k$, it is
the monotone decreasing pattern that maximizes $S_{132,n}(q)$ and it is 
the monotone increasing pattern that minimizes $S_{132,n}(q)$.

\subsection{The Outline of our Paper}
In this paper, we first present a computational proof of the
surprising fact that for all $n$, the equalities
\begin{equation} \label{triple}
S_{132,n}(231)=S_{132,n}(312)=S_{132,n}(213)
\end{equation}
hold. The first equality is trivial, since taking the inverse 
of a 132-avoiding permutation keeps that permutation 132-avoiding,
and turns 231-patterns into 312-patterns. However, the second
equality is non-trivial. (The reverse or complement of 
a 132-avoiding permutation is not necessarily 132-avoiding.)
 In particular, if $a(p)$ denotes the number of
213-copies in $p$, and $b(p)$ denotes the number of 231-copies in $p$, then
the statistics $a(p)$ and $b(p)$ are {\em not} equidistributed over the
set of all 132-avoiding permutations of length $n$, but their
average values are equal over that set. 

In other words, we will prove that a randomly selected
non-monotonic pattern of length three in a 132-avoiding
permutation is equally likely to be a 231-pattern, a 
312-pattern, or a 213-pattern. It is well-known (see Chapter 14 of \cite{bona})
that 132-avoiding permutations of length $n$ are
counted by the Catalan numbers $c_n={2n\choose n}/(n+1)$, and as such,
they are one of more than 150 distinct kinds of objects counted by those
numbers. 
 However, we do not know of any other example when a natural statistic
on objects counted by Catalan numbers shows a similar threefold symmetry. 

In the next part of the paper we provide a bijective proof
of (\ref{triple}). Finally, we will significantly generalize this
result by showing more than $c_{h-2}$
 pairs of patterns $q$ and $q'$ of length $h$ that 
behave as 213 and 231, that is, for which $S_{n,132}(q)=
S_{n,132}(q')$, and the equality is non-trivial.

\section{Arguments Using Generating Functions}
Let $d_n$ be the total number of inversions (in other words, copies
of the pattern 21) in 
all 132-avoiding $n$-permutations. It is proved in \cite{occurrences} that 
\begin{equation} 
\label{inversions} D(x)= \sum_{n\geq 1}d_nx^n=\frac{x}{1-4x} \cdot
\left (\frac{1}{\sqrt{1-4x}} - \frac{1-\sqrt{1-4x}}{2x} \right).
\end{equation}

\subsection{Counting Copies of 213}
Let $a_n$ be the total number of all 213-patterns in all 132-avoiding 
permutations of length $n$. Clearly, then $a_0=a_1=a_2=0$.

There are three ways that a 132-avoiding permutation $p$ of length $n$
 can contain
a 213-pattern $q$. Either $q$ is entirely on the left of the entry $n$, or
$q$ is entirely on the right of $n$, or $q$ ends in $n$. 

 For $n\geq 3$, this leads to the recurrence relation
\[a_n= \sum_{i=1}^n a_{i-1}c_{n-i} + \sum_{i=1}^{n} c_{n-1}a_{n-i} + \sum_{i=3}^n
d_{i-1} c_{n-i}.\]

Let $A(x)$ (resp. $C(x)$) be the ordinary generating function for the
sequence of the numbers $a_n$ (resp. $c_n$).
Then the last displayed formula 
 yields the functional equation
\[A(x)= 2xA(x)C(x)+xD(x)C(x) ,\]
which is equivalent to 
\begin{equation}
\label{explicitA} A(x)=\frac{xD(x)C(x)}{1-2xC(x)}=\frac{x}{2(1-4x)^2}
+\frac{x-1}{2(1-4x)^{3/2}} + \frac{1}{2(1-4x)}.\end{equation}
From here, we get that if $n\geq 3$, then  
\[a_n=\frac{n}{2}4^{n-1}+\frac{1}{2}4^n-(2n+1){2n-1\choose n-1}+
(2n-1){2n-3\choose n-2}, \]
which simplifies to 
\begin{equation}
\label{exactfora} a_n=(n+4)\cdot 2^{2n-3}-(2n+1){2n-1\choose n-1}+
(2n-1){2n-3\choose n-2}. \end{equation}

\subsection{Counting Copies of 231}
Let $h_n$ be the total number of all non-inversions (in other words, 
copies of the pattern 12) in all
132-avoiding permutations of length $n$. It is proved in \cite{occurrences} that
\begin{equation} \label{non-inversions}
H(x)=\sum_{n\geq 0}h_nx^n=\frac{1}{2(1-4x)} + \frac{1}{2x}
-\frac{1-x}{2x\sqrt{1-4x}}.
\end{equation}

Let $b_n$ be the total number of all 231-copies in all 
132-avoiding permutations of length $n$, and let $B(x)=\sum_{n\geq 0}b_nx^n$. 

Let \begin{equation}
\label{explicitZ} Z(x)=\sum_{n\geq 0}nc_nx^n=
\sum_{n\geq 0}{2n\choose n}\frac{n}{n+1}x^n=
\frac{1}{\sqrt{1-4x} }- \frac{1-\sqrt{1-4x}}{2x}.\end{equation}
Note that
$Z(x)$ is the generating function for the number of entries (which are
copies of the pattern 1)
in all 132-avoiding $n$-permutations. 

If $p$ is a 132-avoiding $n$-permutation, and $q$ is a 231-pattern contained
in $p$, then either $q$ is entirely on the left of the entry $n$, or $q$
is entirely on the right of the entry $n$, or the entry $n$ is the largest
entry of $q$, or the first and second entries of $q$ form a 12-pattern on 
the left of $n$, while the third entry of $q$ is on the right of $n$.

For $n\geq 3$, this leads to the recurrence relation
\[b_n=\sum_{i=1}^n b_{i-1}c_{n-i} + \sum_{i=1}^{n} c_{n-1}b_{n-i} +
\sum_{i=2}^{n-1}(i-1)(n-i)c_{i-1}c_{n-i} + \sum_{i=3}^{n-1} h_{i-1}c_{n-i}(n-i).\]
In terms of generating functions, this yields
\[B(x)=2xB(x)C(x) + xZ^2(x)+xH(x)Z(x),\]
\[B(x)=\frac{ xZ^2(x)+xH(x)Z(x)}{1-2xC(x)}=
\frac{ xZ^2(x)+xH(x)Z(x)}{\sqrt{1-4x}}.\]
Given the explicit formulae (\ref{non-inversions}) and (\ref{explicitZ})
for $H(x)$ and $Z(x)$, the last displayed equation yields
the formula 
\begin{equation} \label{explicitB}
 B(x)=\frac{xD(x)C(x)}{1-2xC(x)}=\frac{x}{2(1-4x)^2}
+\frac{x-1}{2(1-4x)^{3/2}} + \frac{1}{2(1-4x)}.
\end{equation}

The proof of the main result of this section is now immediate.
\begin{theorem}
For all positive integers $n$, the equalities
\[
S_{132,n}(231)=S_{132,n}(312)=S_{132,n}(213)\]
hold.
\end{theorem}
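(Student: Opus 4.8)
The plan is to prove the two equalities by completely different means, since the first is a symmetry of the objects themselves while the second is a coincidence of counts that only becomes visible after computation.

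First I would establish $S_{132,n}(231) = S_{132,n}(312)$ by means of the inverse map $p \mapsto p^{-1}$ on $S_n$. Two facts make this work: the pattern $132$ is its own inverse as a permutation, so $p \mapsto p^{-1}$ restricts to a bijection on the set of $132$-avoiding $n$-permutations; and the inverse of the permutation $231$ is $312$, so under this map each occurrence of $231$ in $p$ corresponds to exactly one occurrence of $312$ in $p^{-1}$, and vice versa. Summing the pattern counts over all $132$-avoiding permutations of length $n$ then yields the first equality, with no generating functions required at all.

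For the second equality $S_{132,n}(231) = S_{132,n}(213)$, which is the genuinely surprising one, I would simply compare the two closed forms already in hand. The generating function $A(x)$ counting $213$-copies is given in (\ref{explicitA}) and the generating function $B(x)$ counting $231$-copies is given in (\ref{explicitB}); both reduce to the identical expression $\frac{x}{2(1-4x)^2} + \frac{x-1}{2(1-4x)^{3/2}} + \frac{1}{2(1-4x)}$. Hence $A(x) = B(x)$, and comparing coefficients of $x^n$ gives $a_n = b_n$, that is, $S_{132,n}(213) = S_{132,n}(231)$.

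The hard part is not this final comparison but the derivation of $B(x)$. The main obstacle is the fourth summand in the recurrence for $b_n$, which counts $231$-patterns split across the maximal entry $n$, with the two entries forming the initial $12$ lying to the left of $n$ and the minimal entry lying to its right. Encoding this mixed contribution correctly as $xH(x)Z(x)$, and then checking that after substituting the explicit formulae (\ref{non-inversions}) and (\ref{explicitZ}) for $H(x)$ and $Z(x)$ the entire expression collapses to the same closed form as $A(x)$, is where all the work lives. Once that algebraic collapse is verified the theorem is immediate, and dividing each total by the Catalan number $c_n$ then recovers the equality of averages announced in the introduction.
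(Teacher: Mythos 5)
Your proposal is correct and follows essentially the same route as the paper: the first equality via the inverse bijection (using that $132$ and the pair $231$, $312$ are exchanged by inversion), and the second by observing that the generating functions in (\ref{explicitA}) and (\ref{explicitB}) coincide. Your added remark that the real work lies in deriving $B(x)$, particularly the mixed term $xH(x)Z(x)$, accurately locates where the substance of the argument resides in the preceding section.
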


\begin{proof} As we mentioned in the Introduction, the first
equality is trivially true since there is a natural bijection
between the 231-copies of the 132-avoiding permutation $p$
and the 312-copies of the 132-avoiding permutation $p^{-1}$. Indeed, let 
$p=p_1p_2\cdots p_n$ be a 132-avoiding permutation, and let
$1\leq i<j<k\leq n$.
Then $p_ip_jp_k$ is a 231-copy in $p$ if and only if 
$ijk$ is a 312-copy in $p^{-1}$. 

The equality $S_{n,132}(231)=S_{n,132}(213)$ holds since we have seen
in formulae (\ref{explicitA}) and (\ref{explicitB}) that the two 
sides of this equality have identical generating functions. 
\end{proof}

\section{A Bijective Proof} 
In this section we provide a bijective proof for  the surprising
identity $S_{n,132}(213)=S_{n,132}(231)$. 

\subsection{Binary Plane Trees}
In our proof, we will identify a 132-avoiding permutation $p$ with its
{\em binary plane tree} $T(p)$ using a very well-known bijection. 
We will briefly describe this bijection now. For more details,
the reader may consult Chapter 14 of \cite{bona}.
The tree $T(p)$ will be a  binary plane tree, that is,
a rooted unlabeled
 tree in which each vertex has at most two children, and each child
is a left child or a right child of its parent, even if it is the only
child of its parent.  

The root of $T(p)$ corresponds to the entry $n$ of $p$, the left subtree
of the root corresponds to the string of entries of $p$ on the left of
$n$, and the right subtree of the root corresponds to the string of
entries of $p$ on the right of $n$. Both subtrees are constructed
recursively, by the same rule.  Note that since $p$ is 132-avoiding,
the position of the entry $n$ of $p$ determines the set of entries
that are on the left (resp. on the right) of $n$. In fact, if $n$
 is in the $i$th position, 
the set of entries on the left of $n$ must be $\{n-i+1,n-i+2,\cdots ,n-1\}$,
 and the set of
entries on the right of $n$ must be $\{1,2,\cdots ,n-i\}$.

We point out that in the process of constructing $T(p)$, 
each vertex of $T(p)$ is associated to an entry of
$p$.  Indeed, each vertex is added to $T(p)$ as the root of a subtree $S$,
 and so
each vertex is associated to the entry that is the largest among the entries
that belong to $S$.
However, it is important to point out that
$T(p)$ is an {\em unlabeled tree} since the way in which the entries
of $p$ correspond to the vertices of $T(p)$ is completely determined
 by the unlabeled tree $T(p)$ as long as $p$ is 132-avoiding. 

See Figure \ref{binplane} for an illustration.

\begin{figure}[ht]
 \begin{center}
  \epsfig{file=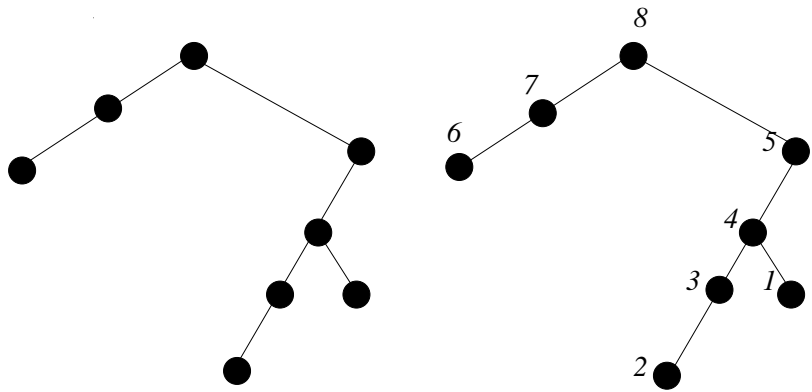}
  \label{binplane}
\caption{The tree $T(p)$ for $p=67823415$, and the entries of $p$ 
associated to the vertices of $T(p)$.}
 \end{center}
\end{figure}

Note that in order to get $p$ from $T(p)$, it suffices to read 
the vertices of $T(p)$ {\em in-order}, that is, by first reading 
the left subtree of the root, then the right subtree of the root, and
then the right subtree of the root. The respective subtrees are read
recursively, by this same rule. Therefore, it is meaningful to talk
about the first, second, etc, last vertex of $T(p)$, since that means
the first, second, etc, last vertex of $T(p)$ in the {\em in-order}
reading. 

A {\em left descendant} (resp. {\em right descendant} of a vertex $x$
in a binary plane tree is a vertex in the left (resp. right) subtree 
of $x$. The left (resp. right) subtree of $x$ does {\em not} contain $x$
itself. 

It is straightforward to see that $p_ip_j$ is a 12-pattern in $p$
if and only if $p_i$ is a left-descendant of $p_j$ in $T(p)$. On the other
hand, $p_jp_i$ is a 21-pattern in $p$ if and only if either $p_i$ is a
right descendant of $p_j$ in $T(p)$ or there is a vertex $x$ in $T(p)$ so
that $p_j$ is a left descendant of $x$ and $p_i$ is a right descendant of $x$.
In the previous section we gave an exhaustive list of the ways in which
213-patterns and 231-patterns can occur in a 132-avoiding permutation.
The reader is invited to translate that list into the language of 
binary plane trees.

\subsection{Our Bijection}
Let $p$ be a 132-avoiding $n$-permutation, and let $Q$ be an occurrence 
of the pattern 213 in $p$. Let  $Q_2,Q_1,Q_3$ be the three
vertices of $T(p)$ that correspond to  $Q$, going left to right.
 Let us
color these three entries black. There are then two possibilities. 
\begin{enumerate}
\item Either $Q_1$ is a right descendant of $Q_2$ and $Q_2$ is a left descendant
of $Q_3$, or
\item there exists a lowest left descendant $Q_x$ of $Q_3$ so that $Q_2$ is
 a left descendant
of $Q_x$ and $Q_1$ is a right descendant of $Q_x$.
\end{enumerate}

Let $A_n$ be the set of all binary plane trees on $n$ vertices in which three
vertices forming a 213-pattern are colored black. Let $B_n$ be the set
of all binary plane trees on $n$ vertices in which three vertices forming
a 231-pattern are colored black. 

Now we are going to define a map $f:A_n\rightarrow B_n$. We will then
prove that $f$ is a bijection. 
The map $f$ will be defined differently in the two cases described above.  
\begin{itemize}
\item {\em Case 1.} If $T\in A_n$ is in the first case,  then let 
$f(T)$ be the pair obtained by
interchanging the right subtree of $Q_2$ and the right subtree of $Q_3$.
Keep all three black vertices $Q_i$ black, even as $Q_1$ gets moved.  

See Figure \ref{firstmove} for an illustration.

\begin{figure}[ht]
 \begin{center}
  \epsfig{file=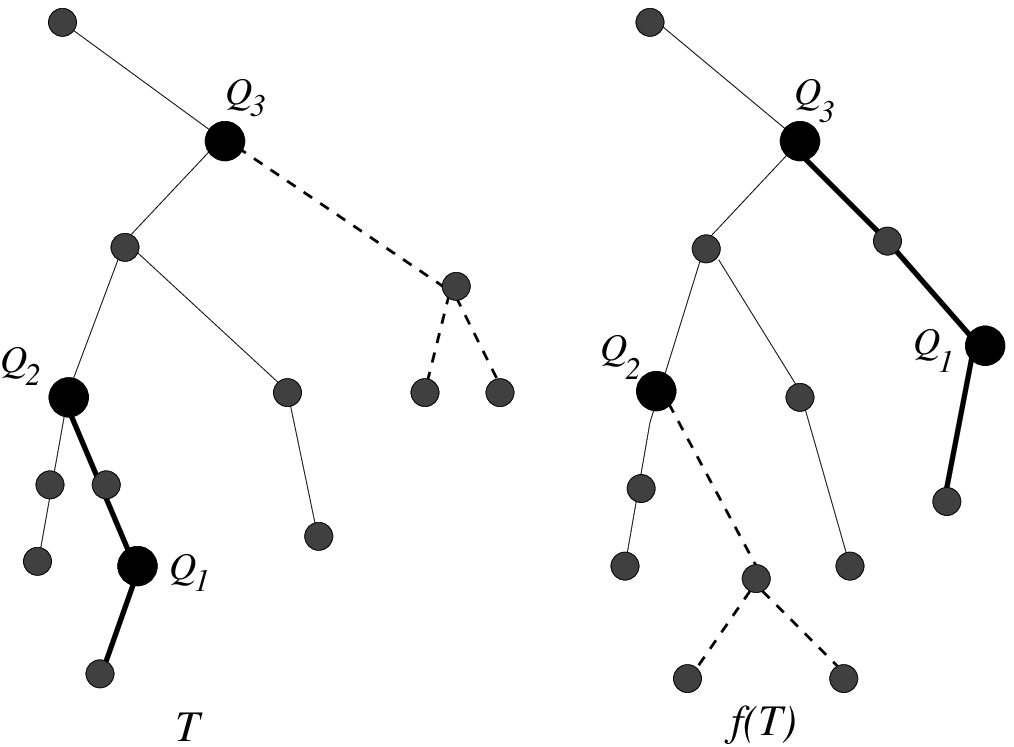}
  \label{firstmove}
\caption{Interchanging the right subtrees of $Q_2$ and $Q_3$.}
 \end{center}
\end{figure}

Note that in $f(T)$, in the set of black vertices, there is one that is
an ancestor of the other two, namely $Q_3$.

\item {\em Case 2.} If $T\in A_n$  is in the second case, then let $f(T)$ be the
tree obtained by interchanging the right subtrees of the
vertices $Q_x$ and $Q_3$, 
and coloring $Q_2$, $Q_x$ and $Q_1$ black. See Figure \ref{secondmove}
for an illustration.

\begin{figure}[ht]
 \begin{center}
  \epsfig{file=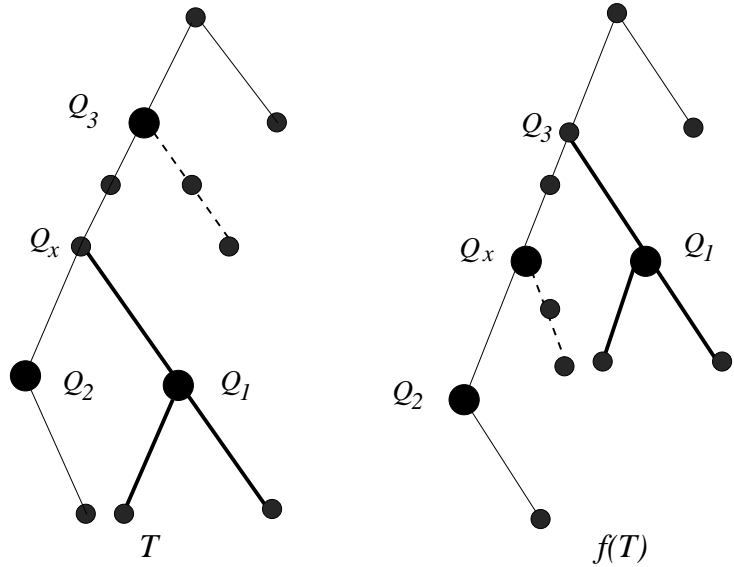}
  \label{secondmove}
\caption{Interchanging the right subtrees of $Q_x$ and $Q_3$.}
 \end{center}
\end{figure}

Note that in $f(T)$, there is no black vertex that is an ancestor of the
other two black vertices. Also note that in $f(T)$, the 
lowest common ancestor of $Q_x$ and $Q_1$ is $Q_3$.
\end{itemize}

It is a direct consequence of our definitions that if $T\in A_n$, then
 $f(T)=B_n$. 
Now we are in a position to prove the main result of this section. 

\begin{theorem}  \label{bijective}
The map $f:A_n\rightarrow B_n$ defined above is a bijection.
\end{theorem}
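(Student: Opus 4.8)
The plan is to prove that $f$ is a bijection by exhibiting an explicit two-sided inverse $g\colon B_n\to A_n$, built from the very same ``swap two right subtrees'' operation that defines $f$. The guiding observation is that both the source 213-triples and the target 231-triples split into two structural types according to a single ancestor relation, that these two dichotomies are exhaustive and mutually exclusive, and that $f$ respects the splitting block by block. Once this is set up, the bijection reduces to the fact that swapping the right subtrees of a fixed ordered pair of vertices is an involution, together with careful bookkeeping of which vertices are colored black.

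First I would record the dichotomy precisely. For a black 213-triple with values $Q_1<Q_2<Q_3$, the two cases are distinguished by whether $Q_2$ is an ancestor of $Q_1$: in Case~1 it is (so $Q_1$ lies in the right subtree of $Q_2$), and in Case~2 it is not (their lowest common ancestor being a strictly lower vertex $Q_x$). Dually, for a black 231-triple with values $R_1<R_2<R_3$, I would distinguish Case~A, where the largest vertex $R_3$ is an ancestor of the smallest vertex $R_1$ (so $R_1$ lies in the right subtree of $R_3$), from Case~B, where it is not (their lowest common ancestor being a strictly lower vertex $y$). This gives disjoint decompositions $A_n=A_n^{(1)}\sqcup A_n^{(2)}$ and $B_n=B_n^{(A)}\sqcup B_n^{(B)}$. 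I would then verify that $f$ respects them: in Case~1, swapping the right subtrees of $Q_2$ and $Q_3$ sends $Q_1$ into the right subtree of $Q_3$ while $Q_2$ stays a left descendant of $Q_3$, so the in-order reading of the three black vertices is a 231-pattern of type~A; in Case~2, swapping the right subtrees of $Q_x$ and $Q_3$ sends $Q_1$ into the right subtree of $Q_3$, the newly-black $Q_x$ becomes the largest of the three black vertices, and its lowest common ancestor with $Q_1$ is $Q_3$, so the in-order reading is a 231-pattern of type~B. Hence $f$ carries $A_n^{(1)}$ into $B_n^{(A)}$ and $A_n^{(2)}$ into $B_n^{(B)}$.

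Next I would define $g$ blockwise as the reverse swap. On a type-A tree I would swap back the right subtrees of $R_2$ and $R_3$ and keep the same three vertices black; on a type-B tree I would swap back the right subtrees of $R_3$ and its lowest common ancestor vertex $y$ with $R_1$, and then recolor the black triple as $R_2$, $R_1$, and $y$, which one checks forms a 213-pattern. Because each constituent map swaps the right subtrees of a fixed ordered pair of vertices, and doing so twice is the identity, the verifications $g\circ f=\mathrm{id}$ and $f\circ g=\mathrm{id}$ reduce to confirming that $g$ always selects exactly the pair of vertices that $f$ used, and that the coloring is tracked correctly. In the type-A / Case-1 block this is immediate: the black set never changes, and the pair $\{R_2,R_3\}=\{Q_2,Q_3\}$ is read off directly as (the middle black vertex, the largest black vertex).

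The main obstacle is the type-B / Case-2 block, where the black set itself changes, since the forward map de-colors $Q_3$ and colors $Q_x$. Here I would argue three things. First, in $f(T)$ the promoted vertex $Q_x$ is precisely the largest black vertex $R_3$, so $g$ swaps using the correct vertex. Second, the lowest common ancestor $y$ of $R_3$ and $R_1$ recovered by $g$ is exactly the old $Q_3$, so $g$ swaps the very same pair $\{Q_x,Q_3\}$ used by $f$. Third, after the reverse swap the recolored triple $\{R_2,R_1,y\}$ equals the original triple $\{Q_2,Q_1,Q_3\}$, with $Q_x$ correctly reverting to non-black. The delicate points are matching the ``lowest left descendant'' condition defining $Q_x$ in the forward map against the ``lowest common ancestor'' condition defining $y$ in the inverse, and checking that the subtree swaps disturb no other vertices, so that the two operations genuinely cancel. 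Once this bookkeeping is in place, $f$ and $g$ are mutually inverse on each block, and therefore $f$ is a bijection. I note in passing that, since Theorem~1 already yields $|A_n|=|B_n|$, it would in fact suffice to establish $g\circ f=\mathrm{id}$, that is, the injectivity of $f$; but constructing the explicit two-sided inverse keeps the argument self-contained and purely combinatorial.
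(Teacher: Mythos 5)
Your proposal is correct and follows essentially the same route as the paper: both arguments rest on the observation that swapping the right subtrees of a fixed pair of vertices is an involution, split $B_n$ by whether the largest black vertex is an ancestor of the other two, and recover the unique preimage by the reverse swap with the appropriate recoloring (using the lowest common ancestor to locate the swap partner in the second case). Your version is somewhat more explicit about verifying that $f$ respects the two-block decomposition and that the two characterizations of the swap vertex match, but the underlying bijection and its inverse are identical to the paper's.
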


\begin{proof}
Let $U\in B_n$. We will show that there is exactly one $T\in A_n$ so that
$f(T)=U$ holds. This will show that $f$ has an inverse, proving that 
$f$ is a bijection.

By definition, three nodes of $U$ are colored black, and the entries
of the permutation corresponding to $U$ form a 231-pattern.
Let $K_2$, $K_3$, and $K_1$ denote these three vertices, from left to right.
There are two possibilities for the location of the $K_i$ relative to each
other. We will show that in both cases, $U$ has a unique preimage under $f$,
essentially because swapping two subtrees is an involution. 
\begin{enumerate} 
\item If $K_3$ is an ancestor of both other black
vertices,  then $f(T)=U$ implies that $T$ belongs to Case 1.
In this case,  the unique $T\in A_n$ satisfying $f(T)=U$ is
 obtained
by swapping the right subtrees of $K_3$ and $K_2$, and keeping all three black
vertices black, even if $K_1$ got moved. 

\item If $K_3$ is not an ancestor of both other black vertices and then
 $f(T)=U$ implies that 
$T$ belongs to Case 2. 
In this case,  let $K_x$ be the smallest common ancestor
of $U_3$ and $U_1$. Then the unique  $T\in A_n$ satisfying $f(T)=U$ is obtained
by swapping the right subtrees of $K_3$ and $K_x$, and coloring $K_x$ black
instead of $K_3$, while keeping $K_1$ and $K_2$ black. 
\end{enumerate}  
This completes the proof.
\end{proof}

\section{A Generalization}
In this section, we will significantly generalize the result of the
previous section. The key observation is that in the proof of Theorem 
\ref{bijective}, the {\em left} subtrees of $Q_1$ and $Q_2$ never changed.

In order to state our result, we announce the following
definitions. 

\begin{definition}
Let $q$ be a pattern of length $k$ and let $t$ be a pattern of length $m$.
Then $q\oplus t$ is the pattern of length $k+m$ defined by
\[ (q\oplus t)_i= \left\{ \begin{array}{l@{\ }l}
q_i  \hbox{  if $i\leq k$},\\
\\
 t_{i-k} +k \hbox{  if $i>k$. }
\end{array}\right.
\]
\end{definition}

In other words, $q\oplus t$ is the concatenation of $q$ and $t$ so that 
all entries of $t$ are increased by the size of $q$.

\begin{example}
If $q=3142$ and $t=132$, then $q\oplus t=3142576$.
\end{example}

\begin{definition}
Let $q$ be a pattern of length $k$ and let $t$ be a pattern of length $m$.
Then $q\ominus t$ is the pattern of length $k+m$ defined by
\[ (q\ominus t)_i= \left\{ \begin{array}{l@{\ }l}
q_i + m \hbox{  if $i\leq k$},\\
\\
 t_{i-k} \hbox{  if $i>k$. }
\end{array}\right.
\]
\end{definition}

In other words, $q\ominus t$ is the concatenation of $q$ and $t$ so that 
all entries of $q$ are increased by the size of $t$.

\begin{example}
If $q=3142$ and $t=132$, then $q\ominus t=6475132$.
\end{example}

Now we are ready to state and prove the most general result of this paper.

\begin{theorem} \label{general}
Let $q$ and $t$  be any non-empty patterns  that end in their largest entry.
Let $i_u$ denote the increasing pattern $12\cdots u$.
Then for all positive integers $n$, we have
\[S_{n,132}((q\ominus t)\oplus i_u) = S_{n,132}((q\oplus i_u)\ominus t),\]
where 1 denotes the pattern consisting of one entry.
\end{theorem}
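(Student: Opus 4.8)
The plan is to unfold the two patterns and then upgrade the bijection of Theorem \ref{bijective} from single vertices to blocks. Writing $k=|q|$ and $m=|t|$, a direct computation from the definitions of $\oplus$ and $\ominus$ gives
\[
(q\ominus t)\oplus i_u=(q_1+m)\cdots(q_k+m)\,t_1\cdots t_m\,(k+m+1)\cdots(k+m+u),
\]
\[
(q\oplus i_u)\ominus t=(q_1+m)\cdots(q_k+m)\,(k+m+1)\cdots(k+m+u)\,t_1\cdots t_m .
\]
Both patterns share the prefix block $Q=(q_1+m)\cdots(q_k+m)$, a copy of $q$ on the middle values, and they differ only in the order of the bottom block $T=t_1\cdots t_m$, a copy of $t$ on the values $1,\dots,m$, and the top block $I=(k+m+1)\cdots(k+m+u)$, an increasing run $i_u$ on the largest values. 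I would record this as: $P_1:=(q\ominus t)\oplus i_u$ reads $Q\,T\,I$ and $P_2:=(q\oplus i_u)\ominus t$ reads $Q\,I\,T$, with the same value blocks $T<Q<I$ in both. This is exactly the block version of $213=Q\,T\,I$ versus $231=Q\,I\,T$ treated in Theorem \ref{bijective}, to which it specializes when $q=t=i_u=1$.

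Next I would pass to binary plane trees and set up $A_n$ (trees with an $h$-vertex black copy of $P_1$) and $B_n$ (with a black copy of $P_2$), where $h=k+m+u$, so that $|A_n|=S_{n,132}(P_1)$ and $|B_n|=S_{n,132}(P_2)$. The crucial structural point, and the reason the hypotheses are stated as they are, is that each of $q$, $t$, and $i_u$ \emph{ends in its largest entry}: in the tree this means that the largest vertex of each block is an ancestor of all the others in that block, which then sit in its left subtree. Hence in a copy of $P_1$ the top block is a left-descending chain $v_u\supset v_{u-1}\supset\cdots\supset v_1$, its top vertex $v_u$ is the root of the whole copy, and the entire configuration $Q\cup T$ hangs in the left subtree of $v_1$, with $Q$ lying both above and to the left of $T$. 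Because $T$ is localized under its own top vertex $T^{*}=\max T$, the pair $(Q^{*},T^{*})$, where $Q^{*}=\max Q$, produces a clean dichotomy just as in Theorem \ref{bijective}: either all of $T$ lies in the right subtree of $Q^{*}$ (Case 1), or there is a unique lowest vertex $Q_x$ with all of $Q$ in its left subtree and all of $T$ in its right subtree (Case 2).

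The map $f\colon A_n\to B_n$ then mimics the proof of Theorem \ref{bijective} with $v_u$ playing the role of the single top vertex. In Case 1 I would interchange the right subtrees of $Q^{*}$ and $v_u$, keeping all black vertices black; this moves $T$ into the right subtree of $v_u$ while leaving the chain $v_u\supset\cdots\supset v_1\supset Q^{*}$ and all left subtrees untouched, so the in-order reading of the black vertices becomes $Q\,I\,T=P_2$. In Case 2 I would interchange the right subtrees of $Q_x$ and $v_u$, then recolor $v_u$ white and $Q_x$ black; since $Q^{*}<Q_x<v_1<\cdots<v_u$, the new top block $\{Q_x,v_1,\dots,v_{u-1}\}$ is again an increasing run $i_u$, the black vertices again read $Q\,I\,T=P_2$, and the now-white $v_u$ becomes the lowest common ancestor of the new top block and of $T$. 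The internal patterns $q$, $t$, $i_u$ survive because the left subtrees of the $Q$- and $T$-vertices never move and each block travels as a whole subtree, precisely the observation highlighted before the definitions. I note also that any binary plane tree we produce automatically encodes a $132$-avoiding permutation, so no separate avoidance check is needed.

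Finally I would check that $f$ is a bijection by producing its inverse, using that swapping two subtrees is an involution. Given a copy of $P_2$, its largest vertex $w$ is the top of the block $I$; if $w$ is an ancestor of all of $T$ the copy came from Case 1, and otherwise the lowest common ancestor of $w$ and $T$ is a white vertex and the copy came from Case 2. In each case the preimage is recovered by the opposite subtree swap, together with the opposite recoloring in Case 2, exactly as in Theorem \ref{bijective}. I expect the main obstacle to be the Case 2 bookkeeping: one must verify that the recoloring really returns an $i_u$-block and that the ``ancestor of $T$'' test cleanly separates the two cases inside $B_n$, so that the inverse is single-valued. This is where the two ``ends in its largest entry'' hypotheses do the real work, forcing each block to be localized under its maximal vertex and thereby guaranteeing the Case 1 / Case 2 dichotomy on both sides.
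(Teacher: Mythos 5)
Your proposal is correct and is essentially the paper's own proof: the block reading $Q\,T\,I$ versus $Q\,I\,T$, the two-case dichotomy (all of $T$ in the right subtree of $Q^{*}$, versus a lowest separating vertex $Q_x$), the swap of right subtrees with the top vertex of the increasing block, and the recoloring in Case 2 all match the paper's map $F$ and its inverse. If anything, you make explicit the role of the ``ends in its largest entry'' hypotheses (each block being localized under its maximum), which the paper leaves implicit.
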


In particular, the result of the previous section  is the special case
of Theorem \ref{general} in which $q=t=i_u=1$ (the one-entry pattern 1). 

\begin{example} If $q=3124$,  $t=213$, and $u=2$,
 then Theorem \ref{general} says
that \[S_{n,132}(645721389)=S_{n,132}(645789213).\]
\end{example}

\begin{proof} (of Theorem \ref{general}) 
Note that we can assume that $q$ and $t$ are both 132-avoiding, since
otherwise the statement of Theorem \ref{general} is trivially true as both
sides are equal to 0.

Let $k$ denote the length of $q$, let $m$ denote the length of $t$.
 Similarly to the proof of Theorem \ref{bijective}, 
 let $A_n$ be the set of all binary plane trees on $n$ vertices in which
$h$ vertices forming a $((q\ominus t)\oplus i_u)$-pattern are colored black, 
and let $B_n$
be the set of all binary plane trees on $n$ vertices in which $h$ vertices
forming a $((q\oplus i_u)\ominus t)$-pattern are colored black. 

Let $T\in A_n$. Let $Q_b$ be the $k$th black vertex of $T$ in the
in-order reading, let  $Q_a$ be 
the $(k+m)$th black vertex of $T$, and let  $Q_c$ be the rightmost  
black vertex of $T$. We are now going to construct a bijection 
$F:A_n\rightarrow B_n$. The construction is analogous to the one that
we saw before Theorem \ref{bijective}

\begin{enumerate}
\item If $Q_a$ is a right descendant of $Q_b$, then let $F(T)$ be
the tree obtained from $T$ by swapping the right subtree of $Q_b$
and the right subtree of $Q_c$. Note that in $F(T)$, the black vertices
form a  $(q\ominus t)\oplus i_u$-pattern, and that $Q_c$ is an ancestor
of all other black vertices  in $F(T)$.
See Figure \ref{firstg} for an illustration.

\begin{figure}[ht]
 \begin{center}
  \epsfig{file=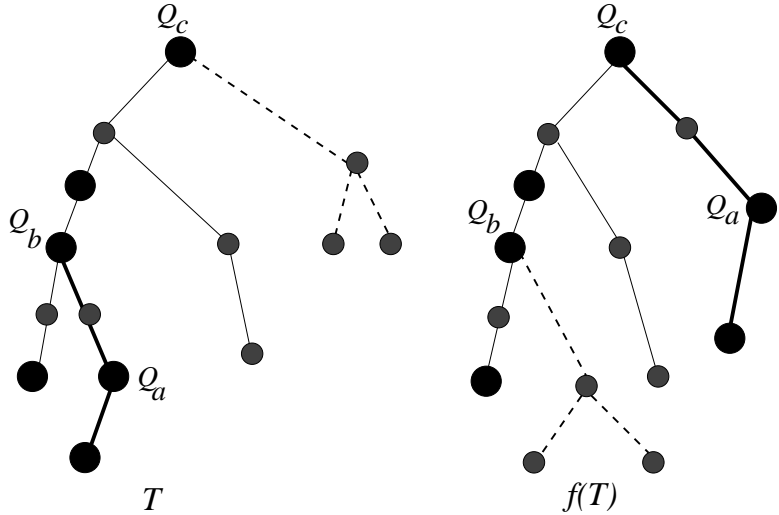}
  \label{firstg}
\caption{Interchanging the right subtrees of $Q_b$ and $Q_c$, and
turning a copy of 341256 into a copy of 345612.}
 \end{center}
\end{figure}

\item  Otherwise,  there exists a  lowest vertex $Q_x\in T$ so that $Q_b$
 is a left descendant
of $Q_x$ and $Q_a$ is a right descendant of $Q_x$. Note that in this
case, it follows that $Q_x$ is not black. Now let $F(T)$ be the
tree obtained from $T$ by swapping the right subtree of $Q_x$ and the right
subtree of $Q_c$, and by coloring $Q_x$ black, instead of $Q_c$. Note that
again, in $F(T)$, the black vertices
form a  $(q\ominus t)\oplus i_u$-pattern. Also note that there is no black 
vertex
in $F(T)$ that would be an ancestor of all other black vertices.
See Figure \ref{secondg} for an illustration.

\begin{figure}[ht]
 \begin{center}
  \epsfig{file=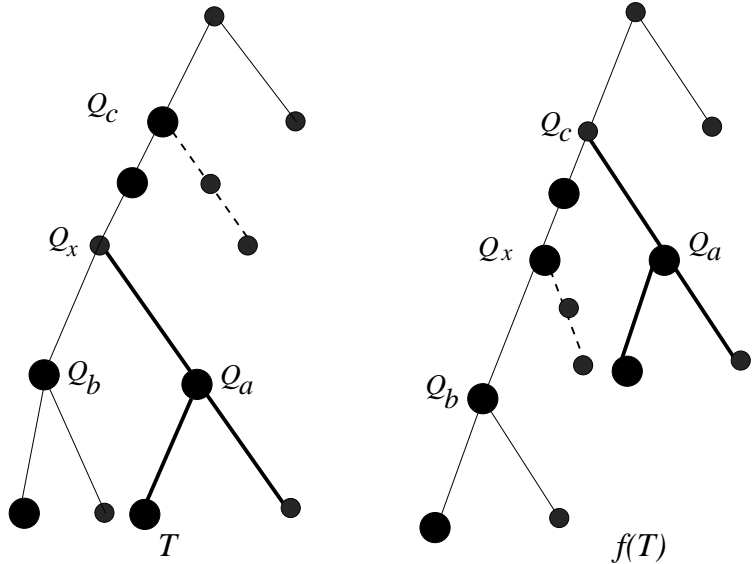}
  \label{secondg}
\caption{Interchanging the right subtrees of $Q_x$ and $Q_c$, 
coloring $Q_x$ black instead of $Q_c$, and
turning a copy of 341256 into a copy of 345612.}
 \end{center}
\end{figure}

\end{enumerate}

It is straightforward to show that $F:A_n \rightarrow B_n$ is a bijection.
Indeed, let $U\in B_n$. If there is a black vertex in $U$ that is an ancestor
of all other black vertices, then $U$ could only be obtained by the first
rule, otherwise $U$ could only be obtained by the second rule. 
The unique preimage $F^{-1}(U)$ is then obtained by swapping the appropriate
right subtrees. In the first case, swap the right subtrees of $U_b$ and
$U_c$, where $U_b$ is the $k$th and $U_c$ is the $(k+u)$th black vertex
of $U$ in the in-order reading.
 In the second case, let $U_x$ be the $(k+1)$st black vertex
of $U$, let $U_a$ be the last black vertex of $U$, 
and let $U_c$ be the lowest common ancestor of $U_x$ and $U_a$. Then
the unique preimage $F^{-1}(U)$ is obtained by swapping the right 
subtrees of $U_x$ and $U_c$, and coloring $U_c$ black instead of $U_x$.
\end{proof}

Note that by transitivity, Theorem \ref{general} implies the following.
\begin{corollary}
Let $q$, $t$, and $i_u$ be as in Theorem \ref{general}, and let 
$1\leq v<u$. Then we have
\[S_{n,132}((q\oplus i_v)\ominus t)\oplus i_{u-v})=S_{n,132}((q\ominus t)\oplus i_u).\]
\end{corollary}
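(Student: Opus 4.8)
The plan is to derive the corollary from Theorem~\ref{general} by two applications of that theorem chained together, exactly as the phrase ``by transitivity'' in the statement suggests. The whole argument is formal manipulation of the direct-sum operations, so the only things requiring care are (i) checking that the hypotheses of Theorem~\ref{general} hold for the modified patterns we feed into it, and (ii) recording the elementary identities for $\oplus$ on increasing patterns.

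First I would apply Theorem~\ref{general} with $q$ replaced by the pattern $q\oplus i_v$, with $t$ unchanged, and with the parameter $u$ replaced by $u-v$. To justify this substitution I must verify that $q\oplus i_v$ still ends in its largest entry: since $q$ ends in its largest entry and, under $\oplus$, every entry of the appended copy of $i_v$ exceeds every entry of $q$, the last entry of $q\oplus i_v$ is the last (hence largest) entry of the shifted $i_v$, so the hypothesis is met. Theorem~\ref{general} then yields
\[
S_{n,132}\bigl(((q\oplus i_v)\ominus t)\oplus i_{u-v}\bigr)
 = S_{n,132}\bigl(((q\oplus i_v)\oplus i_{u-v})\ominus t\bigr).
\]

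Next I would simplify the right-hand side using two facts about the direct sum: the operation $\oplus$ is associative, and $i_v\oplus i_{u-v}=i_u$, since concatenating $12\cdots v$ with the copy of $12\cdots(u-v)$ shifted up by $v$ produces precisely $12\cdots u$. Hence $(q\oplus i_v)\oplus i_{u-v}=q\oplus(i_v\oplus i_{u-v})=q\oplus i_u$, and the displayed right-hand side equals $S_{n,132}((q\oplus i_u)\ominus t)$. Finally I would invoke Theorem~\ref{general} a second time, now with the original $q$, $t$, and $u$, to obtain $S_{n,132}((q\oplus i_u)\ominus t)=S_{n,132}((q\ominus t)\oplus i_u)$. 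Chaining the three equalities gives the claimed identity $S_{n,132}(((q\oplus i_v)\ominus t)\oplus i_{u-v})=S_{n,132}((q\ominus t)\oplus i_u)$.

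The genuinely substantive content lives in Theorem~\ref{general}, which I am assuming; by contrast, this corollary is pure bookkeeping with $\oplus$ and $\ominus$. I therefore expect no real obstacle, and the one point deserving an explicit sentence is the verification that $q\oplus i_v$ ends in its largest entry, so that the first application of Theorem~\ref{general} is legitimate.
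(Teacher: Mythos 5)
Your proposal is correct and matches the paper's own argument: the paper likewise applies Theorem~\ref{general} twice to show both sides equal $S_{n,132}((q\oplus i_u)\ominus t)$, which is exactly your chain of equalities. Your explicit check that $q\oplus i_v$ still ends in its largest entry is a detail the paper leaves implicit, but the route is the same.
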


\begin{proof}
Theorem \ref{general} shows that both
 sides are equal to $S_{n,132}((q\oplus i_u)\ominus t)$.
\end{proof}

\section{Further Directions}
Formula (\ref{exactfora}) implies that $S_{n,132}(213)\sim C_1 4^n n$, while
the  generating functions computed in \cite{occurrences} imply that
$S_{n,132}(321)\sim C_2 4^n n^{3/2}$ and $S_{n,132}(123)\sim C_3 4^n n^{1/2}$,
where the $C_i$ are positive constants.
So occurrences of non-monotone patterns of length three
 are infinitely rare compared to 
occurrences of 321, and infinitely frequent compared to occurrences
of 123; the frequency of non-monotone patterns is halfway between the two 
extremes. 

While precise formulae like the ones given in earlier sections of 
this paper may not be obtainable
for longer patterns, comparative results as the ones described in the previous
paragraph may be possible to prove even for such patterns. 

If we set $u=1$ and  $h=k+m+1$, then Theorem \ref{general}
 provides
$\sum_{i=2}^{h-1}c_{i-2}c_{h-i-1}=c_{h-2}$ non-trivial examples  
 of two patterns
$s$ and $s'$ for which $S_{n,132}(s)=S_{n,132}(s')$ for all $n$.
Other choices of $u$ provide additional such pairs. However, 
it seems that there are other pairs of patterns whose total number
of copies in all 132-avoiding permutations agree. We hope to discuss
such pairs in an upcoming paper. 
 
 Are there any other such
pairs? Are there any such pairs when 132 is replaced by another pattern $r$?
Are there any patterns $r$ and $r'$ for which $S_{n,r}(u)=S_{n,r'}(u')$ for all
$n$ and
the equality is non-trivial?  

Finally, how do the permutation statistics studied in this paper 
translate to the other 150 families of objects counted by the 
Catalan numbers listed in \cite{stanley}?

\end{document}